\newtheorem{thm}{Theorem}
\begin{document}

\vspace*{30px}

\begin{center}\Large
\textbf{A Short Combinatorial Proof of Derangement Identity}
\bigskip\large

Ivica Martinjak\\
Faculty of Science, University of Zagreb\\
Bijeni\v cka cesta 32, HR-10000 Zagreb, Croatia\\
and\\
Dajana Stani\'c\\
Department of Mathematics, University of Osijek\\
Trg Ljudevita Gaja 6, HR-31000 Osijek, Croatia\\

\end{center}


\begin{abstract}
The $n$-th rencontres number with the parameter $r$ is the number of permutations having exactly $r$ fixed points. In particular, a derangement is a permutation without any fixed point. We presents a short combinatorial proof for a weighted sum derangement identities.
\end{abstract}

\noindent {\bf Keywords:} derangements, rencontres numbers, recurrence relation, factorial, binomial coefficient\\
\noindent {\bf AMS Mathematical Subject Classifications:}  	05A05, 05A10

\section{Introduction}

Having a permutation $\sigma \in S_n$, $\sigma : [n] \to [n]$ where $[n]:=\{1,2,\ldots,n\}$, it is said that $k \in [n]$ is a {\it fixed point} if it is mapped to itself, $\sigma(k)=k$. Permutations without fixed points are of particular interest and are usually called {\it derangements}. We let $D_n$ denote the number of derangements of the set $[n]$, $D_n= |S_n^{(0)}| $, $$S_n^{(0)}:=\{\sigma \in S_n: \sigma(k) \neq k, k=1,\ldots, n\}.$$ 

Derangements are usually introduced in the context of inclusion-exclusion principle \cite{Bona,GKP,Stan}, since this principle is used to provide an interpretation of $D_n$ as a {\it subfactorial},
\begin{eqnarray} \label{themainformula}
D_n= n! \sum_{k=0} ^n \frac{(-1)^k}{k!}.
\end{eqnarray}
The numbers $D_0, D_1, D_2,\ldots, D_n, \ldots$ form recursive sequence $(D_n)_{n \ge0}$ defined by the recurrence formulae
\begin{eqnarray} \label{themainrecurrence}
D_n= (n-1)(D_{n-1} + D_{n-2})
\end{eqnarray}
and initial terms $D_0=1, D_1=0$. There is a counting argument to prove this. Let the number $k$ be mapped by $\sigma$ to the number $j$, $j=1,\ldots, k-1, k+1, \ldots, n$. Note that there are $(n-1)$ such permutations $\sigma$. Now, we separate the set of permutations $\sigma$ into two disjoint sets ${\cal A}$ and ${\cal B}$, such that
\begin{eqnarray*}
{\cal A}&:=& \{ \sigma \in S_n^{(0)} : \sigma(j) \neq k, \sigma(k)=j \}\\
{\cal B}&:=& \{ \sigma \in S_n^{(0)} : \sigma(j) = k, \sigma(k) =j \}.
\end{eqnarray*}
This means that
$$D_n=(n-1) (|{\cal A}| + |{\cal B}|).$$
The set ${\cal A}$ counts $D_{n-1}$ elements while the set ${\cal B}$ counts $D_{n-2}$ elements. The fact that the number $k$ in this reasoning is chosen without losing generality, completes the proof of (\ref{themainrecurrence}).

By a simple algebraic manipulation with (\ref{themainrecurrence}) we obtain another recurrence for the sequence $(D_n)_{n \ge 0}$,
\begin{eqnarray} \label{thesecondrecurrence}
D_n&=&n D_{n-1} + (-1)^n.
\end{eqnarray}
Namely, it holds true
$$
nD_{n-2} - D_{n-1} - D_{n-2}= -nD_{n-3}+D_{n-2} + 2D_{n-3} = \cdots = (-1)^n
$$
and this immediately gives the above recurrence from (\ref{themainrecurrence}). 

When we iteratively apply recurrence (\ref{thesecondrecurrence}) to the derangement number on the r.h.s. of  this relation we get
\begin{eqnarray*}
nD_{n-1} +(-1)^n &=& n  \big [  (n-1)D_{n-2} + (-1)^{n-1}    \big] +(-1)^n\\
\end{eqnarray*}
which finally results with
\begin{eqnarray} \label{telescopingresult}
 n(n-1)(n-2) \cdots 3 (-1)^2 + n(n-1)(n-2) \cdots 4(-1)^3 + \cdots +(-1)^n.
\end{eqnarray}
on the r.h.s. of (\ref{thesecondrecurrence}), which completes the proof of (\ref{themainformula}).

A few identities for the sequence $(D_n)_{n \ge0}$ are known \cite{DeEl,Hass,PWZ}. In \cite{DeEl} Deutsche and Elizalde give a nice identity
\begin{eqnarray} \label{DeEl}
D_n&=& \sum_{k=2}^n (k-1) \binom{n}{k} D_{n-k}.
\end{eqnarray}
Recently, Bhatnagar presents families of identities for some sequences including the shifted derangement numbers \cite{Bhat}, deriving it using an Euler's identity \cite{Bhat2}. In what follows we demonstrate a combinatorial proof for that derangement identitiy, with weighted sum.

\section{A pair of weighted sums for derangements}


We define the {\it rencontres number} $D_n(r)$ as the number of permutations $\sigma \in S_n$ having exactly $r$ fixed points. Thus, $D_n(0) = D_n$. For a given $r \in \mathbb{N}$, we define the sequence $D_0(r), D_1(r), \ldots, D_n(r), \ldots$, denoted $(D_n(r))_{n \ge r}$.

Applying an analogue counting argument that we used when proving relation (\ref{themainrecurrence}), one can represents rencontres numbers by the derangement numbers,
\begin{eqnarray} \label{derangementsrencontres}
D_n(r) = \binom{n}{r} D_ {n-r}.
\end{eqnarray}

On the other hand, relation (\ref{derangementsrencontres}) follows immediately from the fact that fixed points here are $r$-combinations over the set of $n$ elements.

A few other notable properties of the rencontres numbers is also known. The difference between numbers in the sequences $(D_n)_{n \ge 0}$ and $(D_n(1))_{n \ge 1}$ alternate for the value 1, which follows from (\ref{thesecondrecurrence}). According to the definition of rencontres numbers, the sum of the $n$-th row in the array of numbers $(D_n(r))_{n \ge r}$ is equal to $n!$,
\begin{eqnarray}
n! = \sum_{k=0}^n D_n(k).
\end{eqnarray}
Moreover, identity (\ref{derangementsrencontres}) shows that $D_n$ can be interpreted as a weighted sum of rencontres numbers in the $n$-th row of the array, by means of relation (\ref{DeEl}),
\begin{eqnarray}
D_n&=& \sum_{k=2}^n (k-1) D_{n}(k).
\end{eqnarray}

The number $D_n/(n-1)$ is also a weighted sum of previous consecutive derangement numbers. For example, $24 + 12 D_2 + 4 D_3 + D_4  = \frac{D_6}{5}$. In general we have
\begin{eqnarray}\label{Lemma1Prototype}
n! +  \sum_{k=2}^{n} \frac{n!}{k!} D_k = \frac{D_{n+2}}{n+1},
\end{eqnarray}
as follows from Theorem \ref{firsttheorem}.

\begin{thm} \label{firsttheorem}
For $n \in \mathbb{N}$ and the sequence of derangement numbers $(D_n)_{n \ge 0}$ we have
\begin{eqnarray} \label{thefirstdual}
1 + \sum_{k=1}^n \frac{D_k}{k!} = \frac{D_{n+2}}{(n+1)!}.
\end{eqnarray}
\end{thm}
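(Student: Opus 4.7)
The plan is to clear denominators by multiplying both sides of (\ref{thefirstdual}) by $(n+1)!$ and then read the resulting identity as a partition of the derangements of $[n+2]$ according to the cycle containing a distinguished element. After multiplication, the identity becomes
\begin{eqnarray*}
D_{n+2} = (n+1)! + \sum_{k=1}^n \frac{(n+1)!}{k!} D_k,
\end{eqnarray*}
and since $D_0 = 1$ and $D_1 = 0$ the constant term $(n+1)!$ can be absorbed as the $k=0$ summand (and a phantom $k=1$ term added), so the target identity is equivalent to
\begin{eqnarray*}
D_{n+2} = \sum_{k=0}^{n} \frac{(n+1)!}{k!} D_k.
\end{eqnarray*}

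Next I would fix the distinguished element $n+2 \in [n+2]$ and partition the set $S_{n+2}^{(0)}$ according to the length $j$ of the cycle of $\sigma$ containing $n+2$. Because $\sigma$ is a derangement, this length satisfies $2 \le j \le n+2$. For each such $j$, I would count the derangements with this property in three independent steps: choose the other $j-1$ elements of the cycle in $\binom{n+1}{j-1}$ ways, arrange them in a cyclic order with $n+2$ in $(j-1)!$ ways, and derange the remaining $n+2-j$ elements of $[n+2]$ in $D_{n+2-j}$ ways. Setting $k = n+2-j$, the product simplifies to
\begin{eqnarray*}
\binom{n+1}{j-1}(j-1)!\, D_{n+2-j} = \frac{(n+1)!}{(n+2-j)!}\, D_{n+2-j} = \frac{(n+1)!}{k!}\, D_k,
\end{eqnarray*}
and summing over $j \in \{2,\ldots,n+2\}$, i.e.\ $k \in \{0,\ldots,n\}$, yields the right-hand side of the displayed form.

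Since every derangement of $[n+2]$ belongs to exactly one class of this partition, the two sides are equal, which reverses the reduction and establishes (\ref{thefirstdual}). The main point requiring care is the cycle count $(j-1)!$: once the $j-1$ partners of $n+2$ are chosen, the number of directed cycles on these $j$ labels is $(j-1)!$, so that no overcounting or undercounting occurs; everything else is just a bookkeeping rewrite of factorials. An alternative, purely algebraic backup proof by induction on $n$ is available through the relation $D_{n+2}/(n+1)! - D_{n+1}/n! = D_n/n!$, which is exactly recurrence (\ref{themainrecurrence}) written at index $n+2$, but the combinatorial route above is the one consonant with the paper's declared theme.
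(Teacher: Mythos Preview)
Your proof is correct and combinatorial, but it uses a different decomposition from the paper's. The paper fixes two distinct indices $k,j$ and looks at the set $\mathcal{A}_n$ of derangements with $\sigma(k)=j$, so that $|\mathcal{A}_n|=D_n/(n-1)$; it then splits $\mathcal{A}_n$ according to whether or not $\sigma(j)=k$, obtaining the recurrence $|\mathcal{A}_n|=D_{n-2}+(n-2)\,|\mathcal{A}_{n-1}|$, and the identity drops out by iterating that recurrence down to the base. You instead partition $S_{n+2}^{(0)}$ in a single stroke by the length of the cycle through the distinguished element $n+2$. The two arguments are closely related---unrolling the paper's recurrence amounts to walking around that same cycle one edge at a time---but your version is more direct and avoids the auxiliary recurrence, at the price of invoking the cycle structure of permutations, which the paper never mentions explicitly. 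Your algebraic backup via $D_{n+2}/(n+1)!-D_{n+1}/n!=D_n/n!$ is exactly the paper's second, telescoping proof that follows the theorem. One tiny wording quibble: after multiplying by $(n+1)!$ the sum already runs from $k=1$, so no ``phantom $k=1$ term'' needs to be added; absorbing $(n+1)!$ as the $k=0$ summand alone gives $\sum_{k=0}^{n}\frac{(n+1)!}{k!}D_k$.
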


\begin{proof}
Within a derangement $\sigma$, the number $k$, $k=1,\ldots, n$ can be mapped to any $j$, $j=1, \ldots, k-1, k+1, \ldots, n$. We let ${\cal A}_n$ denote the set of derangements with $\sigma(k)=j$, where $j \neq k$, $${\cal A}_n :=\{\sigma \in S_n^{(0)} : \sigma(k)=j\}.$$ Obviously, cardinality of the set ${\cal A}_n$ is invariant to the choice of $j$, $j \neq k$. More precisely,
$$|{\cal A}_n | = \frac{D_n}{n-1}.$$

Furthermore, we separate the set ${\cal A}_n $ into two disjoint sets of derangements, sets ${\cal B}_n $ and ${\cal C}_n $,
\begin{eqnarray*}
{\cal B}_n   &:=&\{\sigma \in {\cal A}_n    : \sigma(j) = k   \} \\
{\cal C}_n   & :=&\{ \sigma \in {\cal A}_n  : \sigma(j) \neq k  \}.
\end{eqnarray*}
Obviously, the set ${\cal B}_n $ counts $D_{n-2}$ elements. For derangements in ${\cal C}_n$ there are now $(n-2)$  equivalent ways to map $j$ (excluding $j$ and $k$), as Figure \ref{Fig1} illustrates. Thus,  we have $$|{\cal C}_n | = (n-2) | {\cal A}_{n-1}|,$$
which gives the recurrence relation
\begin{eqnarray} \label{recproofdoublecount}
|{\cal A}_n|= D_{n-2} + (n-2) |{\cal A }_{n-1}  |.
\end{eqnarray}
After repeating usage of (\ref{recproofdoublecount}) we get identity (\ref{Lemma1Prototype}) which completes the proof.
\end{proof}

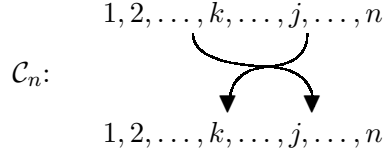
\begin{figure}[h]
\begin{center}  
\setlength{\unitlength}{0.8mm}
\begin{picture}(100, 25)




\put(0,10){${\cal C}_n$:}
\put(15,20){$1,2,\ldots,k,\ldots,j,\ldots,n$}
\put(15,0){$1,2,\ldots,k,\ldots,j,\ldots,n$}
\qbezier(49,18)(49,13)(42.5,12.5)
\qbezier(42.5,12.5)(36,12)(36,6)
\put(34,5){$\blacktriangledown$}

\qbezier(30,18)(30,13)(42.5,12.5)
\qbezier(42.5,12.5)(50,12)(50,6)
\put(48,5){$\blacktriangledown$}

\end{picture}
\end{center}
\caption{In case of derangements in the set ${\cal C}_n $ there are $(n-2)$  equivalent ways to map $j$.}  \label{Fig1}
\end{figure}

In order to prove Theorem \ref{firsttheorem} algebraically, we apply recurrence (\ref{themainrecurrence}) to get
\begin{eqnarray*}
\frac{D_{n+2}}{(n+1)!} &=& \frac{(n+1)(D_{n+1} +D_n )}{(n+1)!}= \frac{D_{n+1}}{n!} + \frac{D_n}{n!}\\
 &=& \frac{n(D_n+D_{n-1})}{n!}  +\frac{D_n}{n!} = \frac{D_n}{(n-1)!} +\frac{D_{n-1}}{(n-1)!}+ \frac{D_n}{n!} \\
 &=& 1 + \frac{D_1}{1!} + \cdots + \frac{D_n}{n!}  = 1+ \sum_{k=1}^{n} \frac{D_k}{k!}.\\
\end{eqnarray*}




\begin{thm} \label{secondtheorem}
For $n \in \mathbb{N}$ and the sequence of derangement numbers $(D_n)_{n \ge 0}$ we have
\begin{eqnarray} \label{theseconddual}
1+ \sum_{k=1}^n \frac{(-1)^k D_{k+3}}{k+2} = (-1)^n D_{n+2}.
\end{eqnarray}
\end{thm}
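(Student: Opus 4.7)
The plan is to reduce the identity to the basic recurrence (\ref{themainrecurrence}) rewritten in the shifted form $D_{k+3} = (k+2)(D_{k+2}+D_{k+1})$, which is exactly what is needed to simplify the fraction $D_{k+3}/(k+2)$ appearing in every summand.

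First I would verify the base case $n=1$, which reduces to $1 - D_4/3 = 1 - 3 = -2 = -D_3$. Then I would proceed by induction on $n$. Assuming the identity holds at stage $n-1$, namely
\begin{equation*}
1 + \sum_{k=1}^{n-1} \frac{(-1)^k D_{k+3}}{k+2} = (-1)^{n-1} D_{n+1},
\end{equation*}
I add the new summand $(-1)^n D_{n+3}/(n+2)$ to both sides and then apply the recurrence (\ref{themainrecurrence}) in the form $D_{n+3}/(n+2) = D_{n+2} + D_{n+1}$ to the added term. The right-hand side becomes $(-1)^{n-1} D_{n+1} + (-1)^n(D_{n+2} + D_{n+1})$, in which the two copies of $D_{n+1}$ cancel, leaving $(-1)^n D_{n+2}$, as desired.

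Alternatively, and more in the spirit of a direct combinatorial/algebraic manipulation, I would substitute $D_{k+3}/(k+2) = D_{k+2} + D_{k+1}$ into the left-hand sum at once, obtaining
\begin{equation*}
1 + \sum_{k=1}^{n}(-1)^k D_{k+2} + \sum_{k=1}^{n}(-1)^k D_{k+1},
\end{equation*}
then reindex both sums by $j=k+2$ and $j=k+1$ respectively. The two resulting sums share all intermediate terms with opposite signs, so they telescope down to $(-1)^n D_{n+2} - D_2$; using $D_2=1$ and combining with the initial $1$ yields $(-1)^n D_{n+2}$.

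There is no real obstacle here: the only potentially delicate point is bookkeeping the index shifts and signs in the telescoping version, or equivalently keeping track of the sign pattern in the inductive step. Since the paper emphasizes combinatorial arguments, I would present the induction version, mirroring the counting structure behind recurrence (\ref{themainrecurrence}) that was established at the start of the introduction, and would note in passing the telescoping viewpoint as an alternative derivation.
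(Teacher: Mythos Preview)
Your proposal is correct, and your ``alternative'' telescoping argument is exactly the proof the paper gives: it absorbs the constant $1$ as the $k=0$ term $D_3/2=1$, applies $D_{k+3}=(k+2)(D_{k+2}+D_{k+1})$ from recurrence~(\ref{themainrecurrence}) to every summand, and telescopes. Your induction version is simply the same cancellation carried out one step at a time; note, contrary to your closing remark, that the paper's own proof of this theorem is purely algebraic rather than combinatorial.
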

\begin{proof}
By applying recurrence (\ref{themainrecurrence}) we have
\begin{eqnarray*}
\sum_{k=0}^n \frac{(-1)^k D_{k+3}}{k+2} &=& \frac{2(D_2+D_1)}{2} - \frac{3(D_3+D_2)}{3} + \cdots (-1)^n \frac{(n+2)(D_{n+2} + D_{n+1})}{n+2}\\
&=& (D_2 + D_1) - (D_3+D_2)+ \cdots (-1)^n (D_{n+2}+D_{n+1})\\
&=& (-1)^nD_{n+2}
\end{eqnarray*}
which completes the proof.
\end{proof}

Once having Theorem \ref{firsttheorem}, substitution of (\ref{derangementsrencontres}) in identity (\ref{thefirstdual}) gives the generalization (\ref{idthm1}). 
\begin{eqnarray} \label{idthm1}
1 + \sum_{k=1}^n \frac{D_{k+r}(r)}{  k!  \binom{k + r}{r} }  = \frac{D_{n+r+2}(r)}{(n+1)! \binom{n+r+2}{r}  }.
\end{eqnarray}



The identity (\ref{thm2identity}) follows by substitution of (\ref{derangementsrencontres}) in (\ref{theseconddual}),
\begin{eqnarray} \label{thm2identity}
1+ \sum_{k=1}^n \frac{(-1)^k D_{k+r+3}(r)}{(k+2) \binom{k+r+3}{r}} =     \frac{ (-1)^n D_{n+r+2}(r)}{\binom{n+r+2}{r}}.
\end{eqnarray}

Note that the terms in identity (\ref{thm2identity}) are always integers, which can be seen as a consequence of recurrence relation (\ref{themainrecurrence}).



\end{document}